\documentclass{article}
\title{A new proof of finitary isomorphism for Markov chains}
\author{Yinon Spinka\thanks{Tel Aviv University. Research supported in part by ISF grant 1361/22}}
\date{June 2025}

\usepackage{amsmath,amsfonts,amssymb,amsthm,graphics,enumerate,mathtools}
\usepackage{hyperref}
\hypersetup{
    colorlinks=true,
    linkcolor=blue,
    citecolor=red,
    urlcolor=blue,
    pdfborder={0 0 0}
}    

\usepackage{graphicx,xcolor,bbm,fullpage}

\usepackage{cleveref}
  \crefname{theorem}{Theorem}{Theorems}
  \crefname{thm}{Theorem}{Theorems}
  \crefname{lemma}{Lemma}{Lemmas}
  \crefname{lem}{Lemma}{Lemmas}
  \crefname{remark}{Remark}{Remarks}
  \crefname{prop}{Proposition}{Propositions}
  \crefname{proposition}{Proposition}{Propositions}
  \crefname{notation}{Notation}{Notations}
  \crefname{claim}{Claim}{Claims}
  \crefname{observation}{Observation}{Observations}
  \crefname{defn}{Definition}{Definitions}
  \crefname{corollary}{Corollary}{Corollaries}
  \crefname{section}{Section}{Sections}
  \crefname{figure}{Figure}{Figures}
  \crefname{exercise}{Exercise}{Exercises}
  \crefname{example}{Example}{Examples}
  \crefname{assumption}{Assumption}{Assumptions}

\newtheorem{thm}{Theorem}[section]

\newtheorem{lemma}[thm]{Lemma}

\newtheorem{prop}[thm]{Proposition}

\numberwithin{equation}{section}

\theoremstyle{definition}

\def\cF{\mathcal{F}}

\def\P{\mathbb{P}}

\def\Z{\mathbb{Z}}
\def\N{\mathbb{N}}

\newcommand{\1}{\mathbf{1}}
\def\eps{\varepsilon}

\begin{document}
\maketitle

\begin{abstract}
We give a new proof of a result of Rudolph stating that a countable-state mixing Markov chain with exponential return times is finitarily isomorphic to an IID process. Besides being short and direct, our proof has the added benefit of working for processes of finite or infinite entropy.
\end{abstract}

\section{Introduction}

Let $X=(X_n)_{n \in \Z}$ and $Y=(Y_n)_{n \in \Z}$ be countable-state (translation invariant) processes.
We say that $Y$ is a \textbf{factor} of $X$ if it can be expressed as an equivariant function of $X$, i.e., if $Y$ has the same distribution as $F(X)$, for some measurable function $F$ such that $F(X+n)=F(X)+n$ almost surely for all $n \in \Z$. The factor is \textbf{finitary} if $F(X)_0$ is almost surely determined by a random, but finite, portion of $X$. More precisely, if there exists a stopping time $\tau$ with respect to the filtration $\cF_n := (X_i)_{|i| \le n}$, such that $F(X)_0$ is measurable with respect to $\cF_\tau$. Equivalently, $F$ is finitary if its restriction to a set of full measure (with respect to $X$) is continuous. A factor that is almost everywhere invertible is called an \textbf{isomorphism}. We say that $X$ and $Y$ are \textbf{finitarily isomorphic} if there exists an isomorphism $F$ such that both $F$ and $F^{-1}$ are finitary.


Let $X=(X_n)_{n \in \Z}$ be a mixing process taking values in a countable space $A$.
We say that a state $a \in A$ (that occurs with positive probability) is a \textbf{renewal state} for $X$ if $(X_n)_{n<0}$ and $(X_n)_{n>0}$ are conditionally independent given $X_0=a$. Thus, $X$ is a Markov chain if and only if every state is a renewal state. We say that $X$ is a \textbf{renewal process} if it has a renewal state. By the \textbf{return time} of a renewal state $s$, we mean an $\N$-valued random variable $T$ with distribution given by
\[ \P(T=n)=\P(X_1,\dots,X_{n-1} \neq s,~X_n=s \mid X_0=s) .\] By exponential return time, we mean that $T$ has exponential tail, i.e., $\P(T>n) \le Ce^{-cn}$ for some $C,c>0$ and all $n \ge 0$.

\begin{thm}\label{thm:renewal}
    Let $X$ be a countable-state mixing renewal process having a renewal state with exponential return time. Then $X$ is finitarily isomorphic to an IID process.
\end{thm}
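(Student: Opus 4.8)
To prove \Cref{thm:renewal} the plan is to put both sides into a common ``tower'' normal form and then to build the isomorphism by a marker-and-filler scheme, the exponential tail of the return time being what forces the code to be finitary in both directions. We begin with a normal form for $X$. Fix a renewal state $s$ whose return time $T$ has an exponential tail. Since $X$ is mixing and $\P(X_0=s)>0$, the state $s$ occurs with positive density, hence infinitely often on each side of the origin, so the nearest occurrence of $s$ on either side is almost surely at finite distance and the set $\{n:X_n=s\}$ can be read off finitarily. Enumerating the occurrences of $s$ as $(R_i)_{i\in\Z}$, the renewal property of $s$ says exactly that the excursions $\beta_i:=\bigl(R_{i+1}-R_i,\,(X_n)_{R_i<n<R_{i+1}}\bigr)$ form an i.i.d.\ sequence with $R_{i+1}-R_i\sim T$. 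Hence $X$ is finitarily isomorphic to the tower (Kakutani skyscraper) over the i.i.d.\ base $(\beta_i)_{i\in\Z}$ with height $R_{i+1}-R_i$; the reverse map reassembles $X$ from the excursion covering the origin and finitely many of its neighbours, so it is finitary too.

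Next, a normal form for the target. For $q\in(0,1)$ and a ``letter space'' $\Sigma$ that is either finite or a copy of $[0,1]$, let $Z=Z^{q,\Sigma}$ be the i.i.d.\ process on $\{\star\}\sqcup\Sigma$ with $\P(Z_0=\star)=q$ and $Z_0$ otherwise uniform on $\Sigma$. Reading off the occurrences of the symbol $\star$ exhibits $Z$ as a tower over an i.i.d.\ base whose blocks consist of a $\mathrm{Geometric}(q)$ gap $G$ followed by $G-1$ i.i.d.\ uniform letters of $\Sigma$, with height $G$; its entropy rate is $H(q)+(1-q)\log|\Sigma|$ (with the convention $\log|[0,1]|=\infty$), which, by varying $q$ and $\Sigma$, can be made equal to any value in $(0,\infty]$. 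Since $X$ is mixing it is not periodic and so $h(X)>0$; choosing a finite $\Sigma$ when $h(X)<\infty$ and $\Sigma=[0,1]$ when $h(X)=\infty$, we can arrange that $Z$ has exactly the entropy rate of $X$. It therefore suffices to construct a finitary isomorphism between the two towers above: each is a tower over an i.i.d.\ base with an exponentially-tailed height, and the two have the same entropy rate.

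The construction of this isomorphism is the heart of the matter, and I would carry it out by a marker-and-filler argument in the spirit of Keane--Smorodinsky. In the base of a tower one singles out a ``marker'' event of positive probability depending on a bounded number of consecutive blocks, chosen via a self-delimiting pattern so that marker positions are unambiguous and locally detectable; because the heights have exponential tails, both the spacings between markers in the base and the resulting spacings between ``marker columns'' in the tower have exponential tails, so the marker columns are bi-infinite, and between consecutive marker columns one sees an i.i.d.\ sequence of ``super-blocks'' of exponentially controlled size. One then maps super-blocks of the first tower bijectively onto super-blocks of the second in a way that transports the induced i.i.d.\ law exactly: the content inside a super-block is re-encoded into the gap-and-content data needed to rebuild the matching super-block of the other tower, the equality of entropy rates being precisely what allows this re-encoding to balance. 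Re-erecting the tower over each base and recovering the symbol at the origin from the finitely many super-blocks surrounding it then yields a code and an inverse code that are both finitary.

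The main obstacle is this last step, and within it the demand that the re-encoding of super-blocks be \emph{exactly} measure-preserving --- so that the output is genuinely the i.i.d.\ law $Z$, not just an approximation to it --- while simultaneously being a bijection off a null set and finitary in both directions; this is what makes the classical constructions intricate, and it is where the exponential tails enter, through Borel--Cantelli, to ensure that every offending feature (an unusually long run with no marker, an unusually large super-block) is almost surely finite. Keeping the marker columns aligned consistently across the origin is a bookkeeping issue absorbed by these same estimates rather than a separate difficulty. Finally, the argument is indifferent to whether the entropy is finite or infinite because it never appeals to the entropy-typicality of long words: the sole entropy input is the scalar matching of entropy rates in the second step, which is equally meaningful, and equally easy to arrange, when that common value is $+\infty$.
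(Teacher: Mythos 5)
Your reductions at the start and end are fine and broadly parallel what the paper does implicitly: viewing $X$ as a tower of i.i.d.\ excursions over the occurrences of $s$, and choosing an IID target of matching entropy (including the infinite-entropy case). But the middle step --- the marker-and-filler construction --- is where the entire content of the theorem lives, and your sketch of it contains a genuine gap rather than a proof. The specific failure is in the sentence ``one then maps super-blocks of the first tower bijectively onto super-blocks of the second in a way that transports the induced i.i.d.\ law exactly: \dots the equality of entropy rates being precisely what allows this re-encoding to balance.'' For the resulting map to be a $\Z$-equivariant isomorphism of the two towers, the bijection must send each super-block to a super-block occupying \emph{exactly the same set of sites} in $\Z$; otherwise the re-erected processes are not aligned and the code is not even well defined as a factor map. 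So what you actually need is, for each possible super-block length $\ell$, an exact measure-preserving finitary bijection between the two \emph{conditional} laws of super-blocks given length $\ell$ --- and these conditional laws have no reason to have equal entropy, so no such single-scale bijection exists in general. Equality of the overall entropy rates does not rescue this. This is precisely why Keane--Smorodinsky need an infinite hierarchy of skeleta and the marriage lemma to shuffle information between scales, and why Rudolph needed his characterization theorem; asserting that the re-encoding ``balances'' is asserting the theorem.

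The second, related omission is that you never arrange for the marker occurrence processes on the two sides to have \emph{identical} distributions, which is the one black box that actually is available (two renewal processes whose distinguished renewal states have the same occurrence law are finitarily isomorphic). The paper's entire effort goes into manufacturing such a matching: first regularizing the return-time tail by compounding with an independent geometric number of excursions (\cref{lem:exp-geom}), and then, in the proof of \cref{lem:fin-iso2}, thinning occurrences of $s$ with auxiliary randomness $(U,W)$ and carefully chosen acceptance probabilities $\alpha_k \propto \P(Z_{k_0}=0\mid Z_0=k)^{-1}$ so that the resulting rare state has occurrence law \emph{exactly} equal to that of the word $0\cdots01$ in a stringing of an IID process --- the independence statement~\eqref{eq:fin-iso-k-indep} is the whole point. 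Nothing in your proposal produces an exact distributional match of markers; a ``self-delimiting pattern of positive probability'' on each side gives two marker processes with different laws, and you are back to the unproved super-block re-encoding. To repair the argument you would either have to import the full Keane--Smorodinsky/Rudolph machinery (at which point the proof is no longer yours) or supply a construction, as the paper does, that realizes a common renewal state exactly in both processes.
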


\noindent\textbf{Background and discussion.}
The problem of classifying processes up to isomorphisms is a fundamental one in ergodic theory and dynamical systems. The most basic processes are finite-valued IID process, also known as Bernoulli schemes. It was an open problem for several decades to determine whether an IID process whose marginals are uniform on two symbols is isomorphic to one whose marginals are uniform on three symbols, until in the late 1950s, Kolmogorov introduced the notion of entropy for dynamical systems, and showed that it is an isomorphism invariant. A decade later, Ornstein~\cite{ornstein1970bernoulli} proved that it is a complete isomorphism invariant for the class of finite-valued IID processes, meaning that any two such processes are isomorphic if and only if they have the same entropy. Prior to this result, Meshalkin~\cite{meshalkin1959case} gave interesting examples of isomorphisms between certain IID processes, with the added benefit that the constructions produced finitary isomorphisms.
In the late 1970s, Keane and Smorodinsky~\cite{keane1979bernoulli} showed that any two finite-valued IID processes of equal entropy are finitarily isomorphic, and shortly after extended this to the class of finite-state mixing Markov chains~\cite{keane1979finitary}.
For countable-valued processes, certain aspects of the problem may become more difficult.
For countable-valued IID processes, that entropy is a complete finitary isomorphism invariant was shown by Meyerovitch and the author~\cite{meyerovitch2024entropy} (the case of infinite entropy was previously established by Petit~\cite{petit1982deux}).
For countable-state mixing Markov chains, while entropy is a complete isomorphism invariant, it is not a complete finitary isomorphism invariant. Indeed, it is not hard to see that in order for such a process to be a finitary factor of an IID process, it must have exponential return times. Rudolph~\cite{rudolph1982mixing} showed that a finite-entropy countable-state mixing Markov chain with exponential return times is finitarily isomorphic to an IID process. This result relies on previous work of the same author~\cite{rudolph1981characterization} in which a rather complicated characterization of those processes which are finitarily isomorphic to an IID process. The case of infinite entropy is left open by Rudolph, though he mentions that his work and Petit's work together indicate that the result should hold in this case too.

Our \cref{thm:renewal} clearly implies that a countable-state mixing Markov chain with exponential return times is finitarily isomorphic to an IID process. In fact, it is not hard to see that a renewal process as in the theorem is finitarily isomorphic to such a Markov chain. Indeed, this can be done by recording at each point the history since the last occurrence of the renewal state. In this way, the theorem is easily seen to be equivalent to its version for Markov chains, which is Rudolph's result (in the finite-entropy case). Note, however, that the obtained Markov chain may have a countable state space, even when the renewal process has only finitely many states. While Keane and Smorodinsky's proof for finite-state Markov chains is direct, the only existing proof of the result for renewal processes goes through Rudolph's heavy machinery and has no ``direct proof'' to date (see~\cite{shea2009finitary} for partial progress in this direction). We therefore hope that our much simpler and more direct approach to proving \cref{thm:renewal} is beneficial.

\section{Proof}

We split the proof of the theorem into two steps, which we state as propositions.
We say that an $\N$-valued random variable $T$ has \textbf{regular exponential tail} if there exist $C>0$ and $b>c>0$ such that
\[ \P(T=n) = Ce^{-cn} \pm O(e^{-bn}) \qquad\text{as }n \to \infty.\]
We say that $T$ has \textbf{semi-regular exponential tail} if it is unbounded and
\[ \liminf_{n \to \infty} \P(T=n \mid T \ge n)>0 \qquad\text{and}\qquad \limsup_{n \to \infty} \P(T=n \mid T \ge n)<1 .\]

\begin{prop}\label{lem:fin-iso1}
    Let $X$ be a countable-state mixing renewal process having a renewal state with exponential return time. Then $X$ is finitarily isomorphic to a finite-state renewal process $Y$ having a renewal state whose return time has regular exponential tail.
\end{prop}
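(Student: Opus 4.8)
The plan is to build the isomorphism in two stages: first pass from the given countable-state renewal process $X$ to a finite-state one, and then upgrade the exponential tail of the return time to a *regular* exponential tail. Let $s$ be the renewal state with exponential return time $T$, and let $p_n := \P(T = n)$.

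\paragraph{Stage 1: reducing to finitely many states.}
Since $T$ has an exponential tail, the Markov chain on renewal blocks has finite entropy only if the block distribution does; but regardless, I would first collapse the countable alphabet. The idea is that a renewal process is determined by (i) the law of $T$ and (ii) the law of an excursion (the word $X_1,\dots,X_{T-1}$ between consecutive visits to $s$), conditioned on its length. I would code each excursion by its length alone, plus an extra IID ``label'' process that supplies the information needed to reconstruct the actual excursion. Concretely: on the new side I take $Y$ to be the renewal process whose renewal state is still $s$, whose return time is $T$, and whose excursion of length $n-1$ is a fixed deterministic word (say $n-1$ copies of a second symbol $\star$); this $Y$ is finite-state (in fact two or three states). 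To make $X$ and $Y$ finitarily isomorphic, I would use the fact that both are finitary factors of a common extension: from $X$, read off the renewal times and excursion lengths (finitary, since $T$ has exponential tail the waiting time to see $s$ is a.s. finite and has exponential tail), then the conditional law of the excursion content given its length is a finite (countable) distribution that can be simulated/extracted using an auxiliary IID process threaded through the blocks. The standard ``fill in the blocks with IID labels'' argument, together with the finitary i.i.d.\ coding results one may cite, gives a finitary isomorphism $X \cong Y_0$ where $Y_0$ is a finite-state renewal process with return time exactly $T$ (still only assumed exponential tail). I expect this stage to be essentially bookkeeping once the marker structure is set up, the one subtlety being to ensure the coding of excursion-content is genuinely finitary, which follows from the exponential tail of $T$ controlling block lengths.

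\paragraph{Stage 2: regularizing the tail.}
Now I have a finite-state renewal process $Y_0$ with renewal state $s$ and return-time law $p_n$ satisfying $p_n, \P(T>n) \le Ce^{-cn}$, and I must modify it (within its finitary isomorphism class) so that the return time $T'$ satisfies $\P(T'=n) = C'e^{-c'n} \pm O(e^{-b'n})$ for some $b' > c' > 0$. The key point is that one is free to replace the law of $T$ by any other law $q_n$ provided the two renewal processes are finitarily isomorphic; and two finite-state renewal processes built on the same excursion-content rule, with return-time laws $p$ and $q$, should be finitarily isomorphic whenever $p$ and $q$ have the same exponential decay rate and the passage between them can be realized by a finitary recoding of the renewal times alone. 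So the task reduces to: given $T$ with exponential tail and a target rate, produce $T'$ with a regular exponential tail and exhibit a finitary isomorphism between the two renewal-time point processes. I would construct $T'$ by ``splitting'' renewal epochs: introduce an auxiliary IID process and, at each $s$-visit, independently decide either to keep it or to insert/delete epochs according to a scheme that (a) is finitary and invertible, and (b) convolves the base law with a geometric-type kernel so that the resulting $q_n$ has the clean form $C'e^{-c'n}(1+o(1))$ with exponentially small error. A clean way to get the $O(e^{-bn})$ error is to arrange $T' = G + R$ where $G$ is geometric with parameter $e^{-c'}$ (contributing the pure exponential $C'e^{-c'n}$) and $R$ has a strictly faster exponential tail (contributing the $O(e^{-bn})$ correction after convolution), and then realize this decomposition as an a.e.-invertible finitary code on the renewal sequences.

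\paragraph{Main obstacle.}
The hard part will be Stage 2: showing that the tail-regularizing recoding of the renewal-time point process can be made \emph{simultaneously} finitary and finitarily invertible. Finitariness in the forward direction is not hard (the recoding is ``local'' in block-count), but the inverse must reconstruct the original renewal times from the regularized ones using only a finite window, and the splitting/merging of epochs can a priori propagate information over long ranges; controlling this requires the exponential tail to guarantee that the ``coupling region'' around each epoch is a.s.\ finite with exponential tail, and assembling these local inverses into a global finitary inverse is the delicate step. I would handle it by a marker-and-filler argument: use rare but positive-density configurations of the renewal sequence as synchronization markers that the recoding leaves recognizable, so that both the code and its inverse can locate a nearby marker in finite (exponentially-tailed) time and decode the block in between.
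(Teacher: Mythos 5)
Your Stage 1 is in the right spirit: it amounts to re-deriving the fact, used as a black box in the paper (via \cite[Theorem~3.2]{meyerovitch2024entropy}), that two renewal processes with renewal states of the \emph{same} distribution and equal entropy are finitarily isomorphic, the entropy being matched by threading IID labels through the excursions. But note that this tool only lets you move between processes sharing a renewal state of \emph{identical} distribution; it gives you no way to change the law of $T$ itself.

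That is exactly where Stage 2 breaks down. You propose to replace the return-time law $p$ by a regular law $q$ (e.g.\ the law of $G+R$ with $G$ geometric) and then exhibit a finitary isomorphism between the two renewal-time point processes by a marker--filler argument. There is no mechanism for this: the two point processes have genuinely different laws, no candidate bijection between realizations is proposed, and no known result asserts that two renewal processes whose return times merely share an exponential decay rate are finitarily isomorphic --- if such a statement were available, the proposition would be immediate and the ``regular exponential tail'' condition would carry no content. The paper avoids recoding return times altogether. It adjoins an independent IID Bernoulli($\mu$) process $W$ and observes that $(s,1)$ is a \emph{new} renewal state of the extension $(X,W)$ whose return time is automatically the geometric compound $T^*_\mu = T_1+\cdots+T_N$ with $N\sim\mathrm{Geom}(\mu)$; a lemma of Angel and the author shows $T^*_\mu$ has regular exponential tail for small $\mu$ (this is close to your ``keep each $s$-visit with probability $\mu$'' remark, but you then abandon it in favor of the $G+R$ decomposition). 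The isomorphism between $X$ and the resulting finite-state process is then established through a \emph{second} renewal state whose distribution is untouched by the thinning (so the equal-distribution tool applies), after splitting a dummy state to equalize entropies; the regular renewal state comes along for free as a different renewal state of the same target process. Two further points you would need: a stringing argument to manufacture a second renewal state whose indicator process has small entropy (so the entropy bookkeeping closes), and the observation that mixing guarantees $T$ is not supported on a proper subgroup of $\Z$, which is a hypothesis of the compound-geometric lemma.
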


\begin{prop}\label{lem:fin-iso2}
    Let $Y$ be a finite-state ergodic renewal process having a renewal state whose return time has semi-regular exponential tail. Then $Y$ is finitarily isomorphic to an IID process.
\end{prop}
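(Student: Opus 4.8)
The plan is to prove Proposition~\ref{lem:fin-iso2} via an explicit marker-and-filler construction. Since $Y$ is a finite-state renewal process with renewal state $s$, we may code $Y$ as the concatenation of i.i.d.\ ``words'', where each word is the block of symbols strung between consecutive occurrences of $s$; the word lengths are i.i.d.\ copies of the return time $T$, and conditionally on the length, the interior symbols have a fixed law. Thus $Y$ is determined by (i) a renewal point process $R \subseteq \Z$ whose gaps are i.i.d.\ $\sim T$, and (ii) an i.i.d.\ decoration of the gaps chosen from the conditional interior laws. The decorations carry a bounded amount of entropy per unit length (finite state space), so the only real work is to produce a finitary isomorphism between the gap-renewal process and a suitable i.i.d.\ process, after which the decorations and any residual entropy can be attached by a standard fattening/splitting argument (using that a finite-entropy i.i.d.\ process with a sufficiently rich alphabet finitarily factors onto, and is finitarily isomorphic to, any i.i.d.\ process of weakly smaller entropy, and that we may pad $Y$ by tensoring with extra i.i.d.\ randomness extracted from long blocks).

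The heart of the argument is therefore a finitary isomorphism between the renewal point process with gap law $T$ and an i.i.d.\ process. Here is where the semi-regular exponential tail hypothesis enters. The plan is to run a Keane--Smorodinsky-style skeleton construction: first use the i.i.d.\ source to plant a sparse, finitarily-recognizable set of ``anchor'' markers (a pattern in the i.i.d.\ sequence that occurs with small but positive density and is recognizable from a bounded window), and on the renewal side identify an analogous sparse set of distinguished renewal epochs (e.g.\ renewal points followed by an atypically long gap, or a rare local pattern of gaps). The condition $\limsup_n \P(T = n \mid T \ge n) < 1$ guarantees that arbitrarily long gaps occur, so such rare renewal patterns exist and have controllable positive density; the condition $\liminf_n \P(T = n \mid T \ge n) > 0$ gives a uniform lower bound on the hazard rate, which is exactly what is needed so that the conditional law of the renewal process ``after a long gap'' does not degenerate and retains enough independence to pour in fresh randomness. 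Between consecutive anchors on each side, one then matches the finite blocks: the gaps of the renewal process inside a block are reconstructed bit-by-bit from the i.i.d.\ bits inside the corresponding i.i.d.\ block, and conversely, by a marriage/filling scheme in which the hazard-rate bounds ensure every finite gap-configuration gets assigned enough probability mass. This is done so that the map and its inverse each look back only as far as the nearest anchor, hence are finitary.

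The main obstacle I expect is the block-filling step: making the correspondence between finite stretches of the renewal process and finite stretches of the i.i.d.\ process simultaneously (a) measure-preserving, (b) mutually finitary, and (c) robust to the fact that, unlike a Bernoulli shift, the renewal process has long-range constraints (a renewal epoch forces the arithmetic of all later epochs within the block). The semi-regularity is precisely the lever that tames this: the two-sided bound on the conditional point-mass $\P(T=n\mid T\ge n)$ lets one write the renewal process, after conditioning on a long gap, as a mixture in which a definite fraction of the randomness is a fresh independent ``restart'', so one can interleave injections of i.i.d.\ bits without ever running out of mass or of independence. I would organize the proof so that Proposition~\ref{lem:fin-iso1} delivers $Y$ in a particularly convenient normal form (regular exponential tail, finite alphabet), note that regular implies semi-regular so Proposition~\ref{lem:fin-iso2} applies, and then carry out the anchor construction plus block filling, verifying finitariness of both directions via the stopping-time characterization in the introduction. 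Handling the infinite-entropy case costs nothing extra here, since after the point-process isomorphism the decorations are attached by an entropy-matching step that is agnostic to whether the total entropy is finite.
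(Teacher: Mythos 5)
There is a genuine gap: the heart of your argument --- the ``anchor'' construction plus the ``marriage/filling scheme'' matching finite blocks of the renewal process to finite blocks of the i.i.d.\ process --- is exactly the step you identify as the main obstacle, and you never actually carry it out. Executing a Keane--Smorodinsky marker--filler scheme directly between a renewal point process with general gap law and an i.i.d.\ process is precisely what has resisted a direct proof in the literature (the paper points to~\cite{shea2009finitary} for only partial progress in this direction), so deferring it to ``a standard block-filling step tamed by the hazard-rate bounds'' does not constitute a proof. In particular, your claim that conditioning on a long gap lets you write the process as a mixture containing ``a fresh independent restart'' is a plausible heuristic, but you give no construction that is simultaneously measure-preserving, bijective, and finitary in both directions, and the arithmetic constraints you yourself flag (a renewal epoch forces all later epochs in the block) are not resolved.

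The paper sidesteps this entirely by a different device. After reducing (by collapsing and splitting states) to the case where the conditional law of the next symbol depends only on the time $Z_n$ since the last renewal, it adjoins independent i.i.d.\ randomness $(U,W)$ and defines, in the $(k_0+1)$-stringing, a marker set $b=\bigcup_k b(k)$ in which a candidate occurrence with $Z_0=k$ is \emph{accepted} only when $U_0<\alpha_k$ with $\alpha_k\propto\P(Z_{k_0}=0\mid Z_0=k)^{-1}$. Semi-regularity enters only to guarantee $\P(Z_{k_0}=0\mid Z_0=k)\ge c^{k_0}$, so that these acceptance probabilities can be normalized to lie in $[0,1]$. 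This thinning makes $\{X_0\in b\}$ independent of $Z_0$, hence the occurrence process of $b$ is disjoint at separations $\le k_0$ and exactly independent at separations $>k_0$ --- i.e., it has \emph{exactly} the distribution of the word $0\cdots01$ in the stringing of an i.i.d.\ process. The proof then finishes by invoking, as a black box, the fact that two renewal processes whose renewal states have the same distribution are finitarily isomorphic (\cite[Theorem 3.2]{meyerovitch2024entropy}); no block-by-block filling is ever performed. Your proposal contains no analogue of this exact-matching device, which is the essential new idea; without it (or a completed filler construction), the argument does not close.
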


\cref{thm:renewal} follows immediately from these propositions.
The proofs of both propositions rely heavily on the fact that if two renewal processes have renewal states with the same distribution, then the two processes are finitarily isomorphic. As noted in~\cite{keane1979finitary}, for finite-valued processes, this follows from the marker-filler methods of Keane and Smorodinsky~\cite{keane1979bernoulli}. For countable-valued processes, one may appeal to a result of Meyerovitch and the author~\cite[Theorem 3.2]{meyerovitch2024entropy} (see also Lemma 3.5 there). We use this fact repeatedly in the proofs. By the distribution of a state $s$ in a process $X$, we mean the distribution of the process $(\1_{\{X_n=s\}})_{n \in \Z}$.

We shall also use some fairly standard facts and terminology which we briefly recall. By independent splitting of a state in some process, we mean that each occurrence of this state is independently replaced by a sample from some given distribution on a set of new symbols. Such an operation can increase the entropy of the process by any prescribed amount, but does not effect the distributions or renewal properties of other states. A dual operation is that of collapsing a given set of states into a single state. The $k$-stringing of a process $X$ is the process $X^k := ((X_n,X_{n+1},\dots,X_{n+k-1}))_{n \in \Z}$, which is trivially finitarily isomorphic to $X$. Also, if $s$ a renewal state in $X$, then any word of length $k$ containing $s$ in one of its coordinates is a renewal state in $X^k$. Finally, if some renewal state has exponential return time in $X$, then every renewal state has exponential return time in $X$, and also in $X^k$.

For the first proposition, we shall also require the following result by Angel and the author~\cite{angel2021markov}.
Let $T$ be an $\N$-valued random variable, and let $T_1,T_2,\dots$ be independent copies of $T$.
Let $\mu \in (0,1)$ and let $N \sim \text{Geom}(\mu)$ be independent of $\{T_n\}_n$, with the convention that $N$ takes values in the positive integers and $\P(N=n)=\mu (1-\mu)^{n-1}$ for $n \ge 1$. Define
\[ T^*_\mu := T_1 + \cdots + T_N .\]

\begin{lemma}[\cite{angel2021markov}]\label{lem:exp-geom}
Let $T$ be an $\N$-valued random variable with exponential tail and whose support is not contained in any proper subgroup of $\Z$.
Then $T^*_\mu$ has regular exponential tail for all sufficiently small $\mu>0$.
\end{lemma}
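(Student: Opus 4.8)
The plan is to study the probability generating function of $T^*_\mu$ by singularity analysis. Write $f(z) := \E[z^T] = \sum_{k \ge 1} \P(T=k) z^k$; since $T$ has exponential tail, $f$ is holomorphic on the disc $\{|z| < R\}$ for some $R > 1$. Conditioning on $N$ and using that $N$ is independent of the $T_i$, the generating function of $T^*_\mu$ is
\[ g_\mu(z) := \E\big[z^{T^*_\mu}\big] = \E\big[f(z)^N\big] = \frac{\mu f(z)}{1 - (1-\mu) f(z)}, \]
which is meromorphic on $\{|z| < R\}$ with poles exactly at the zeros of $1-(1-\mu)f$ — none of them cancelled, since $1-(1-\mu)f(z) = 0$ forces $f(z) = \tfrac1{1-\mu} \ne 0$. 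Its Taylor coefficients at the origin are the numbers $\P(T^*_\mu = n)$, so it suffices to establish a dominant-singularity picture: for all sufficiently small $\mu$ there are a real $z_\mu \in (1,R)$ and a $\rho \in (z_\mu, R)$ such that $z_\mu$ is the only pole of $g_\mu$ in $\{|z| < \rho\}$, it is simple, and it has negative residue. Granting this, one writes $g_\mu$ as its polar part plus a function holomorphic on $\{|z| < \rho\}$, and a Cauchy estimate on the latter over $|z| = r$ (for any $z_\mu < r < \rho$) gives $\P(T^*_\mu = n) = C z_\mu^{-n} + O(r^{-n})$ with $C > 0$; this is precisely a regular exponential tail, with $e^{-c} = z_\mu^{-1}$, $e^{-b} = r^{-1}$, and $b > c > 0$ since $1 < z_\mu < r$.

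The first step is to locate $z_\mu$. The function $f$ is continuous and strictly increasing on $[1,R)$ with $f(1) = 1$ and $\lim_{z \uparrow R} f(z) > 1$, so whenever $\mu$ is small enough that $\tfrac1{1-\mu}$ lies below that limit there is a unique $z_\mu \in (1,R)$ with $f(z_\mu) = \tfrac1{1-\mu}$; moreover $z_\mu \downarrow 1$ as $\mu \downarrow 0$, so $z_\mu < R$ and $f$ is holomorphic on a neighbourhood of $\{|z| \le z_\mu\}$. Since $\P(T=k) > 0$ for some $k \ge 1$ we have $f'(z_\mu) > 0$, so $z_\mu$ is a simple zero of $1-(1-\mu)f$ and hence a simple pole of $g_\mu$, with residue $-\mu\big/\big((1-\mu)^2 f'(z_\mu)\big) < 0$; this negativity is exactly what makes the constant $C$ above positive.

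The crux is to rule out any further zero of $1-(1-\mu)f$ on or inside the circle $\{|z| = z_\mu\}$, and this is the one place where the hypothesis that the support $S$ of $T$ generates $\Z$ is used. For $|z| < z_\mu$ we have $(1-\mu)|f(z)| \le (1-\mu)f(|z|) < (1-\mu)f(z_\mu) = 1$, so there is no zero strictly inside. On $|z| = z_\mu$, a zero would force $f(z) = \tfrac1{1-\mu} = f(z_\mu) = f(|z|)$, that is, equality in $|f(z)| \le f(|z|)$ together with $f(z) \in (0,\infty)$; by the equality case of the triangle inequality applied to $f(z) = \sum_{k \in S}\P(T=k)z^k$ this forces each $z^k$, $k \in S$, to be a positive real number, so, writing $z = z_\mu e^{i\theta}$, that $k\theta \in 2\pi\Z$ for every $k \in S$. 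Since $\gcd(S) = 1$, B\'ezout's identity then gives $\theta \in 2\pi\Z$, so $z = z_\mu$. Thus $z_\mu$ is the unique zero of the holomorphic function $1-(1-\mu)f$ (not identically zero, being $1$ at the origin) in the compact disc $\{|z| \le z_\mu\}$, and since its zeros are isolated there is $\rho \in (z_\mu, R)$ with no other zero in $\{|z| \le \rho\}$. This is exactly the dominant-singularity picture above, and the proof is complete. The one genuinely delicate point is this equality analysis on the critical circle — where aperiodicity enters through B\'ezout; everything else is routine complex analysis and elementary monotonicity.
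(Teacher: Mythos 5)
The paper does not prove this lemma at all---it is imported verbatim from the cited reference \cite{angel2021markov}---so there is no in-paper argument to compare against; your proof is a correct, self-contained derivation via the standard dominant-singularity analysis of $g_\mu(z)=\mu f(z)/(1-(1-\mu)f(z))$, with the aperiodicity hypothesis correctly isolated as the ingredient that forces the pole on the circle $|z|=z_\mu$ to be unique. This is essentially the same generating-function argument used in the cited source, so nothing further is needed.
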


\begin{proof}[Proof of \cref{lem:fin-iso1}]
    Let $X$ be a countable-valued mixing renewal process having a renewal state $s_1$ with exponential return time. Assume first that $X$ has a second renewal state $s_2$, and that the entropy of the process $X' := ((\1_{\{X_n=s_1\}},\1_{\{X_n=s_2\}}))_{n \in \Z}$ is strictly less than that of $X$. We first show that, in this case, $X$ is finitarily isomorphic to a finite-state renewal process having a renewal state whose return time has regular exponential tail. We later address the general case.
    
    Let $W$ be an IID Bernoulli($\mu$) process, independent of $X$. Observe that each of $(s_i,j)$, $i \in \{1,2\},j \in \{0,1\}$, is a renewal state for $(X,W)$. Observe also that the return time of $(s_1,1)$ in this process has the law of $T^*_\mu$.
    Since $X$ is mixing, $T$ is not supported on a proper subgroup of $\Z$.
    By \cref{lem:exp-geom}, $T^*_\mu$ has regular exponential tail when $\mu$ is small enough.

    Now let $Y'$ be the $\{1,2,*\}$-valued block factor of $(X,W)$ defined by $Y'_n := f(X_n,W_n)$, where $f(s_1,1) := 1$, $f(s_2,0)=f(s_2,1) := 2$, and $f(x,i) := *$ otherwise. It is not hard to see that both 1 and 2 are renewal states for $Y'$, and that their distributions are the same as those of $(s_1,1)$ in $(X,W)$ and of $s_2$ in $X$, respectively. Since $h(Y') \le h(X') + h(W) < h(X)$ holds provided that $\mu$ is small enough, by independently splitting the state $*$ in $Y'$, we obtain a process $Y$ of equal entropy to $X$. Since $X$ and $Y$ have a renewal state with the same distribution, they are finitarily isomorphic. Since $1$ is a renewal state for $Y$ whose return time has regular exponential tail, the proof is complete in this case.

    It remains to explain the reduction from the general case to the special case considered above.
    Consider the $k$-stringing $X^k$ of $X$, which is of course finitarily isomorphic to $X$.
    Observe that every state of $X^k$ has probability tending to zero as $k \to \infty$.
    Also, every state of $X^k$ whose first coordinate is $s_1$ is a renewal state. Thus, when $k$ is large enough, there exist two distinct renewal states $t_1$ and $t_2$ such that the entropy of $((\1_{\{X^k_n=t_1\}},\1_{\{X^k_n=t_2\}}))_{n \in \Z}$ is arbitrarily small. Since every renewal state in $X^k$ has exponential return time, this completes the reduction step.
\end{proof}

We now turn to the proof of \cref{lem:fin-iso2}, which is an adaptation of the ideas from~\cite{keane1979finitary,akcoglu1979finitary} (see also~\cite{shea2009finitary}) for Markov chains. 

\begin{proof}[Proof of \cref{lem:fin-iso2}]
    Let $Y'$ be a finite-state ergodic renewal process having a renewal state $s$ whose return time $T$ has semi-regular exponential tail. By sending all states other than $s$ to a new common state, and then independently splitting this common state, we may obtain a finite-state ergodic renewal process $Y$ of equal entropy to $Y'$ such that $s$ is a renewal state with the same distribution as in $Y'$. In particular, $Y$ and $Y'$ are finitarily isomorphic. The advantage of $Y$ over $Y'$ is that $\P(Y_{n+1} = s \mid Y_n,Y_{n-1},\dots) = f(Z_n+1)$ depends only on $Z_n := \min\{k \ge 0 : Y_{n-k}=s\}$, the time since the last occurrence of $s$, where
    \[ f(k) := \P(T=k \mid T \ge k) .\]
    Clearly, the process $(Y,Z)$ is clearly finitarily isomorphic to $Y$, and $Y_n=s$ if and only if $Z_n=0$. Note that $Z$ is a Markov chain on $\{0,1,\dots\}$ whose only allowed transitions are $k \mapsto 0$ and $k \mapsto k+1$, which have probabilities $f(k+1)$ and $1 - f(k+1)$, respectively.
    Note that the transition $k \mapsto k+1$ has positive probability for all $k$, since $T$ is unbounded.

    Let $k_0 \ge 1$ and let $(\alpha_k)_{k=0}^\infty$ be a sequence of numbers in $[0,1]$ such that $\alpha_k \propto \P(Z_{k_0}=0 \mid Z_0=k)^{-1}$.
    To see that this is possible, first note that by the definition of semi-regular exponential tail, $\liminf_{k \to \infty} f(k) > 0$ and $\limsup_{k \to \infty} f(k) < 1$, so that there exists $k_0 \ge 1$ and $c \in (0,\frac12)$ such that $f(k) \in [c,1-c]$ for all $k \ge k_0$.
    Note next that $\P(Z_{k_0}=0 \mid Z_0=k) \ge \P(T=k+k_0 \mid T \ge k+1) \ge c^{k_0}$.
    Thus, we may take $\alpha_k := c^{k_0} \cdot \P(Z_{k_0}=0 \mid Z_0=k)^{-1} \le 1$.

    Let $U$ be an IID process of uniform random variables, independent of $Z$.
    Let $W$ be an IID Bernoulli($\eps$) process, independent of $(Z,U)$, with $\eps$ small, but positive, to be determined later.
    The process $(Y,Z,U,W)$ takes values in $A \times \N \times [0,1] \times \{0,1\}$, where $A$ is the state space for $Y$. Let $X := (Y,Z,U,W)^{k_0+1}$ be the $(k_0+1)$-stringing of $(Y,Z,U,W)$.
    For each $k \ge 0$, consider the set of states $b(k)$ in $X$ given by
    \[ b(n) := \big\{ (a,z,u,w) \in (A \times \N \times [0,1] \times \{0,1\})^{\{0,\dots,k_0\}} : z_0 = k,~ z_{k_0}=0,~u_0<\alpha_k,~ w=(0,\dots,0,1) \big\} .\]
    Let $b := \bigcup_{k \ge 0} b(k)$.
    Note that $\P(X_0 \in b) \le \P(W_{k_0}=1) = \eps$.
    We claim that for any $n \ge 1$,
    \begin{equation}\label{eq:fin-iso-k-indep}
     \text{the events $\{ X_0 \in b \}$ and $\{ X_n \in b \}$ are } \begin{cases} \text{disjoint} &\text{if }n \le k_0\\ \text{independent} &\text{if }n > k_0 \end{cases}.
    \end{equation}
    The former is obvious since $(0,\dots,0,1)$ has no self overlaps. We proceed to show the latter.

    We first show that $Z_0$ is independent of $\{ X_0 \in b \}$.
    Of course, $Z_0=k$ on the event $\{ X_0 \in b(k) \}$, and the union of the latter events over $k \ge 0$ is $\{ X_0 \in b \}$. Thus, it suffices to show that the probability of $\{ X_0 \in b(k) \}$ is proportional to $\P(Z_0=k)$.
    Since $U$, $W$ and $Z$ are independent, this is the same as $\P(Z_0=k,Z_{k_0}=0) \cdot \P(U_0<\alpha_k) \propto \P(Z_0=k)$, which in turn is equivalent to $\P(Z_{k_0}=0 \mid Z_0=k) \propto \P(U_0<\alpha_k)^{-1} = \alpha_k^{-1}$, which holds by our choice of $k_0$ and $(\alpha_k)$.

    Fix $n>k_0$ and let us show that $E:=\{ X_0 \in b \}$ and $F:=\{ X_n \in b \}$ are independent. Denote $X^+ := (X_m)_{m \ge n}$ and $X^- := (X_m)_{m < n}$.
    Observe that the conditional law of $X^-$ given $X^+$ depends only on $Z_n$. Write $\pi_k$ for this law given that $Z_n=k$. Then the unconditional law of $X^-$ is $\sum_k \P(Z_n=k)\pi_k$.
    On the other hand, since $F$ is measurable with respect to $X^+$, the conditional law of $X^-$ given $F$ is $\sum_k \P(Z_n=k \mid F) \pi_k$. But we have seen that $Z_n$ is independent of $F$, so that this equals $\sum_k \P(Z_n=k)\pi_k$.
    Thus, the conditional law of $X^-$ given $F$ is the same as its unconditional law, and hence, $X^-$ and $F$ are independent. Since $E$ is measurable with respect to $X^-$, we conclude that $E$ and $F$ are independent.

    We now show that $(Y,Z)$ is finitarily isomorphic to an IID process.
    Fix some $a \in A \setminus \{s\}$, and note that the distribution of the state $t := ((a,1),\dots,(a,k_0+1))$ in $(Y,Z)^{k_0+1}$ has entropy less than $h(Y,Z)$.
    Define the three-valued process $X'$ as the 0-block factor of $X$ given by $X'_n=1$ if $X_n \in b$, $X'_n=2$ if $(Y,Z)^{k_0+1}_n=t$, and $X'_n=*$ otherwise. Since $\P(X_0 \in b) \le \eps$, this process has entropy less than that of $(Y,Z)$ when $\eps$ is small enough.
    Note that $X'_0=1$ forces $X'_1,\dots,X'_{k_0}=*$. It follows that both 1 and 2 are renewal states in $X'$ (see~\cite{keane1979finitary} for a similar argument).
    By independently splitting the state $*$ in $X'$, we may get a process $X''$ with the same entropy as $(Y,Z)$, for which 1 and 2 are still renewal states. Since the distribution of 2 in $X''$ is the same as that of $t$ in $(Y,Z)^{k_0+1}$, these two processes are finitarily isomorphic.
    Finally, a key feature of this construction due to~\eqref{eq:fin-iso-k-indep} is that the distribution of 1 in $X''$ satisfies that $\P(X''_n=1 \mid X''_0=1)=\P(X''_n=1) \cdot \1_{\{n>k_0\}}$ for all $n \ge 1$.
    On the other hand, this also characterizes the distribution of $0\cdots 01$ in the $(k_0+1)$-stringing of an IID process. Thus, $X''$ is finitarily isomorphic to an IID process. This shows that $(Y,Z)$ is also finitarily isomorphic to an IID process.
\end{proof}

\bibliographystyle{plain}
\bibliography{library}

\end{document}